\title{The method for obtaining expressions for coefficients of reverse generating functions}
\author{V.~V.~Kruchinin\\
\small Tomsk State University of Control Systems and Radioelectronics, Russian Federation\\
\small \texttt{kru@2i.tusur.ru}\\
}
\begin{document}
\maketitle

\begin{abstract}
The powers  of generating functions and its properties are analyzed. A new class of functions is introduced, based on the application of compositions of an integer $n$, called composita. The methods for obtaining reciprocal and reverse generating functions, and solutions of the functional equations $F(A(x))=G(x)$, where $A(x)$ is an unknown generating function, are proposed.   

Key words: generating functions, reverse, reciprocal, composita, method.
\end{abstract}

\theoremstyle{plain}
\newtheorem{theorem}{Theorem}
\newtheorem{corollary}[theorem]{Corollary}
\newtheorem{lemma}[theorem]{Lemma}
\newtheorem{proposition}[theorem]{Proposition}

\theoremstyle{definition}
\newtheorem{definition}[theorem]{Definition}
\newtheorem{example}[theorem]{Example}
\newtheorem{conjecture}[theorem]{Conjecture}
\theoremstyle{remark}
\newtheorem{remark}[theorem]{Remark}

\newtheorem{Theorem}{Theorem}[section]
\newtheorem{Proposition}[Theorem]{Proposition}
\newtheorem{Corollary}[Theorem]{Corollary}

\theoremstyle{definition}
\newtheorem{Example}[Theorem]{Example}
\newtheorem{Remark}[Theorem]{Remark}
\newtheorem{Problem}[Theorem]{Problem}
\newtheorem{state}[Theorem]{Statement}
\makeatletter
\def\rdots{\mathinner{\mkern1mu\raise\p@
\vbox{\kern7\p@\hbox{.}}\mkern2mu
\raise4\p@\hbox{.}\mkern2mu\raise7\p@\hbox{.}\mkern1mu}}
\makeatother

\section{Introduction}

Finding expressions of reverse generating function coefficients is not an easy task and it is based on the Lagrange inversion formula \cite{Comtet_1974, Stanley_v2, Flajolet}. 
However, at the present moment there is no such a method for obtaining expressions of reverse generating function coefficients. Below we propose such a method, derived from the Lagrange inversion theorem and \emph{composita} of generating functions.    
\begin{definition}
The composita is the function of two variables defined by \cite{KruCompositae}
\begin{equation}
\label{Fnk0}F^{\Delta}(n,k)=\sum_{\pi_k \in C_n}{f(\lambda_1)f(\lambda_2) \cdots f(\lambda_k)},
\end{equation}
where $C_n$ is the set of all compositions of an integer $n$, $\pi_k$ is the composition
\begin{math}
\sum_{i=1}^k\lambda_i=n
\end{math}
 into $k$ parts exactly.
\end{definition}

Comtet \cite[ p.\ 141]{Comtet_1974} considered similar objects and identities for exponential generating functions, and called them potential polynomials. In this paper we consider the case of ordinary generating functions.

The generating function of the composita is equal to
\begin{equation}\label{GFComposita}
[F(x)]^k=\sum_{n\geq k} F^{\Delta}(n,k)x^n.
\end{equation}

For instance, we obtain the composita of the generating function $F(x,a,b)=ax+bx^2$.

The binomial theorem yields 
\begin{displaymath}
[F(x,a,b)]^k=x^k(a+bx)^k=x^k\sum_{m=0}^k \binom{k}{m}a^{k-m}b^mx^m. 
\end{displaymath}

Substituting $n$ for $m+k$, we get the following expression:
\begin{displaymath}
[F(x,a,b)]^k=\sum_{n=k}^{2k} \binom{k}{n-k}a^{2k-n}b^{n-k}x^n=\sum_{n=k}^{2k}G^{\Delta}(n,k,a,b)x^n. 
\end{displaymath}
Therefore, the composita is 
\begin{equation}
\label{Gnkab}
F^{\Delta}(n,k,a,b)=\binom{k}{n-k}a^{2k-n}b^{n-k}. 
\end{equation}

Now we show the compositae of several known generating functions \cite{Comtet_1974, Wilf_1994} in the Table \ref{tab:a}.

\begin{table}[h]
\begin{center}
\setlength\arrayrulewidth{1pt}
\renewcommand{\arraystretch}{1,3}
\begin{tabular}{|ccc|ccc|}
\hline
&\textbf{Generating function $G(x)$}& & & \textbf{Composita $G^{\Delta}(n,k)$}&\\
\hline
&$ax+bx^2$ &&& $a^{2k-n}b^{n-k}\binom{k}{n-k}$&\\ \hline
&$\frac{bx}{1+ax}$ &&&$(-1)^{n+k}\binom{n-1}{k-1}a^{n-k}b^k$&\\  \hline
&$\ln(1+x)$ &&& $\frac{k!}{n!}\genfrac{[}{]}{0pt}{}{n}{k}$& \\ \hline
&$e^x-1$  &&& $\frac{k!}{n!}\genfrac{\{}{\}}{0pt}{}{n}{k}$& \\ \hline
\end{tabular}
\caption{Examples of generating functions and their compositae}
\label{tab:a}
\end{center}
\end{table}

The notation 
\begin{math}
\genfrac{[}{]}{0pt}{}{n}{k}
\end{math}
are the Stirling numbers of the first kind (see \cite{Comtet_1974,ConcreteMath}). The Stirling numbers of the first kind  count the number of permutations of $n$ elements with $k$ disjoint cycles.

The Stirling numbers of the first kind are defined by the following generating function:
\begin{displaymath}
\psi_k(x)=\sum_{n\geq k} \genfrac{[}{]}{0pt}{}{n}{k} \frac{x^n}{n!}=\frac{1}{k!}\ln^k(1+x).
\end{displaymath}

The notation 
\begin{math}
\genfrac{\{}{\}}{0pt}{}{n}{k}
\end{math} 
are the Stirling numbers of the second kind( see \cite{Comtet_1974,ConcreteMath}). The Stirling numbers of the second kind count the number of ways to partition a set of $n$ elements into $k$ nonempty subsets.

A general formula for the Stirling numbers of the second kind is given as follows:
\begin{displaymath}
\genfrac{\{}{\}}{0pt}{}{n}{k}=\frac{1}{k!}\sum_{j=0}^k(-1)^{k-j}\binom{k}{j}j^n.
\end{displaymath}

The Stirling numbers of the second kind are defined by the generating function
\begin{displaymath}
\Phi_k(x)=\sum_{n\geq k} \genfrac{\{}{\}}{0pt}{}{n}{k} \frac{x^n}{n!}=\frac{1}{k!}(e^x-1)^k.
\end{displaymath}

Considering the formula (\ref{GFComposita}), we can conclude that the composita is a characteristic of the generating function $F(x)$. In tabular form the composita is a triangle:  
\begin{displaymath}
\begin{array}{ccccccccccc}
&&&&& F_{1,1}^{\Delta}\\
&&&& F_{2,1}^{\Delta} && F_{2,2}^{\Delta}\\
&&& F_{3,1}^{\Delta} && F_{3,2}^{\Delta} && F_{3,3}^{\Delta}\\
&& F_{4,1}^{\Delta} && F_{4,2}^{\Delta} && F_{4,3}^{\Delta} && F_{4,4}^{\Delta}\\
& \vdots && \vdots && \vdots && \vdots && \vdots\\
F_{n,1}^{\Delta} && F_{n,2}^{\Delta} && \ldots && \ldots && F_{n,n-1}^{\Delta} && F_{n,n}^{\Delta}\\
\end{array}
\end{displaymath}

\begin{example}\label{Composita_Pascal} 
Suppose 
\begin{math}
F(x)=\frac{x}{1-x}=\sum\limits_{n>0} x^n
\end{math}
where $f(0)=0$, and the rest $f(n)=1$, $n>0$. Let us find its composita. From the formula (\ref{Fnk0}) it follows that  
\begin{displaymath}
F^{\Delta}(n,k)=\sum\limits_{\pi_k \in C_n}1.
\end{displaymath}
Here summation is applied to all compositions of an integer $n$ with exact $k$ parts equal to 
\begin{math}
{n-1 \choose k-1}
\end{math}. 
Thus, 
\begin{displaymath}
F^{\Delta}(n,k)={n-1 \choose k-1}.
\end{displaymath}

Then 
\begin{displaymath}
(x+x^2+x^3+\cdots+x^n+\cdots)^k=\sum\limits_{n\geq k} {n-1 \choose k-1} x^n.
\end{displaymath}

The first terms of the generating function 
\begin{math}
F(x)=\frac{x}{1-x}
\end{math} (it is the Pascal triangle) are shown below
\begin{displaymath}
\begin{array}{ccccccccccc}
&&&&&1\\
&&&& 1 && 1\\
&&& 1 && 2 && 1\\
&& 1 && 3 && 3 && 1\\
& 1 && 4 && 6 && 4 && 1\\
1 && 5 && 10 && 10 && 5 && 1
\end{array}
\end{displaymath}
\end{example}

\section{Composition of ordinary generating functions and its composita}
For obtaining the method for finding reverse generating functions, we write the following lemma, which was proved by author \cite{Kru2010}.
\begin{lemma}
Suppose the functions $f(n)$ and $r(n)$, and their generating functions 
\begin{math}
F(x)=\sum\nolimits_{n\geq 1} f(n)x^n,
\end{math}
\begin{math}
R(x)=\sum\nolimits_{n\geq 0} r(n)x^n
\end{math} are given respectively. Then for the calculation of generating functions composition 
\begin{math}
A(x)=R(F(x))
\end{math}, the following expression is true  

\begin{displaymath}
a(0)=r(0),
\end{displaymath}
\begin{equation}\label{KruForm2}
a(n)=\sum\limits_{k=1}^nF^{\Delta}(n,k)r(k).
\end{equation}	
\end{lemma}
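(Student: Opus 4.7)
The plan is to expand $R(F(x))$ as a power series in $F(x)$ and then convert each power $[F(x)]^k$ into a power series in $x$ using the defining property (\ref{GFComposita}) of the composita. Concretely, I would first write
\begin{displaymath}
A(x) = R(F(x)) = \sum_{k \geq 0} r(k) [F(x)]^k,
\end{displaymath}
which is a legitimate formal power series manipulation because $F(x)$ has no constant term (so each coefficient of $A(x)$ is a finite sum).

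Next I would isolate the $k=0$ contribution, giving $r(0)$ (since $[F(x)]^0 = 1$), and apply formula (\ref{GFComposita}) to each $k \geq 1$ to rewrite
\begin{displaymath}
A(x) = r(0) + \sum_{k \geq 1} r(k) \sum_{n \geq k} F^{\Delta}(n,k)\, x^n.
\end{displaymath}
Swapping the order of summation (justified because, for any fixed $n \geq 1$, only the finitely many terms with $1 \leq k \leq n$ produce an $x^n$ contribution, as $[F(x)]^k$ has no $x^n$ coefficient when $k > n$) yields
\begin{displaymath}
A(x) = r(0) + \sum_{n \geq 1} \left( \sum_{k=1}^{n} F^{\Delta}(n,k)\, r(k) \right) x^n.
\end{displaymath}
Reading off coefficients then gives both $a(0) = r(0)$ and the claimed formula (\ref{KruForm2}).

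There is essentially no deep obstacle in this proof; the only point requiring care is justifying the interchange of summation in the ring of formal power series and observing the correct truncation $k \leq n$, which follows from the fact that $F(x)$ has zero constant term and hence $[F(x)]^k$ vanishes to order at least $k$. Everything else is bookkeeping built on the already-established identity (\ref{GFComposita}).
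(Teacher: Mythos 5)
Your argument is correct and complete: since $F(x)$ has no constant term, the expansion $R(F(x))=\sum_{k\ge 0}r(k)[F(x)]^k$ is a legitimate formal power series, $[F(x)]^k$ has valuation at least $k$, and reading off the coefficient of $x^n$ via (\ref{GFComposita}) gives exactly $a(0)=r(0)$ and (\ref{KruForm2}). Note that the paper itself supplies no proof of this lemma --- it only cites an external reference \cite{Kru2010} --- so there is nothing to compare against; your derivation is the standard one and would serve as a self-contained proof here.
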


Further, when writing the composition 
\begin{math}
A(x)=R(F(x))
\end{math}, the following will be assumed 
\begin{math}
a(0)=r(0).
\end{math} 
 
The obtained formula (\ref{KruForm2}) defines the transformation of the sequence with the given generating function $R(x)$. In this case, the generating function of the resulting sequence will be recorded in the form of a composition 
\begin{math}
A(x)=R(F(x))
\end{math}.

Let us consider the problem of obtaining compositae for a function of the type
\begin{math}
A(x)=xR(F(x))
\end{math}, where 
\begin{math}
R(0)\ne 0
\end{math}. Let us establish the following theorem. 

\begin{theorem}\label{Theorem_CompRnkFnk}
Suppose we have generating functions
\begin{math}
R(x)=\sum\nolimits_{n\geq 0} r(n)x^n
\end{math},
\begin{math}
F(x)=\sum\nolimits_{n> 0} f(n)x^n
\end{math}, 
\begin{math}
R(n,k)
\end{math} is the expression for coefficients of the generating function $R(x)^k$,
\begin{math}
F^{\Delta}(n,k)
\end{math}
is the composita of $F(x)$.
Then, for the generating function 
\begin{math}
A(x)=xR(F(x))
\end{math}, the composita is equal to the following expression:  
\begin{equation}
A(n,m)=\begin{cases}
r(0)^n,  & n=m;\\
\sum\limits_{k=1}^{n-m} F^{\Delta}(n-m,k)R(k,m), &  n>m.\\
\end{cases}
\end{equation}
\end{theorem}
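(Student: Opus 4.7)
The plan is to read off the composita $A(n,m)$ directly from the defining identity $A(x)^m = \sum_{n\geq m} A(n,m)\, x^n$ supplied by (\ref{GFComposita}). Since $A(x) = xR(F(x))$, we have $A(x)^m = x^m \cdot R(F(x))^m$, so the task reduces to expanding $R(F(x))^m$ as a power series in $x$ and then shifting its coefficients by $m$.

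First I would rewrite the inner expression as an honest composition with a known outer series. By the definition of $R(n,m)$ we have $R(x)^m = \sum_{n\geq 0} R(n,m)\, x^n$, an ordinary generating function whose constant term is $R(0,m) = r(0)^m$. Observe that $R(F(x))^m$ is obtained by substituting $F(x)$ into $R(x)^m$, i.e., it is the composition of $R(x)^m$ with $F(x)$. Since $F$ satisfies the hypothesis $F(0)=0$ of the Lemma, we may apply (\ref{KruForm2}) with the role of $r(k)$ played by $R(k,m)$, obtaining
\begin{displaymath}
[x^j]\, R(F(x))^m = \begin{cases} r(0)^m, & j=0,\\[2pt] \displaystyle\sum_{k=1}^{j} F^{\Delta}(j,k)\, R(k,m), & j\geq 1.\end{cases}
\end{displaymath}

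Multiplication by $x^m$ shifts exponents: $[x^n]\, A(x)^m = [x^{n-m}]\, R(F(x))^m$, which vanishes for $n<m$, equals $r(0)^m = r(0)^n$ when $n=m$, and equals $\sum_{k=1}^{n-m} F^{\Delta}(n-m,k)\, R(k,m)$ when $n>m$. Comparing with the defining expansion $A(x)^m = \sum_{n\geq m} A(n,m)\, x^n$ identifies these coefficients as the claimed values of $A(n,m)$. The only subtle point is conceptual rather than computational: one must recognize that the Lemma is applicable with $R(x)^m$ in place of $R(x)$, which is legitimate because its hypotheses concern only the formal structure of the outer series (ordinary generating function) and the vanishing of the inner one at $0$, both of which are manifestly satisfied here.
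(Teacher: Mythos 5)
Your proof is correct and follows essentially the same route as the paper: both apply the composition Lemma (formula (\ref{KruForm2})) to $R(x)^m$ composed with $F(x)$ and then account for the factor $x^m$ as a shift of indices by $m$. Your explicit remark that $R(0,m)=r(0)^m$ (hence $A(n,n)=r(0)^n$) is in fact a slightly cleaner handling of the diagonal case than the paper's intermediate display.
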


\begin{proof} 
Let us consider the expression 
\begin{math}
[B(x)]^m=\left[R(F(x)) \right]^m
\end{math}.
According to the Theorem on composition of generating functions \ref{KruForm2}, we have  
\begin{displaymath}
B(n,m)=\begin{cases}
r(0),  & n=0;\\
\sum\limits_{k=1}^{n} F^{\Delta}(n,k)R(k,m), & n>0.
\end{cases}
\end{displaymath}

Hence, for the composita of the generating function 
\begin{math}
A(x)=x\,B(x)
\end{math}, we obtain  
\begin{displaymath}
A(n,m)=B(n-m,m)=\begin{cases}
r(0)^n,  & n=m;\\
\sum\limits_{k=1}^{n-m} F^{\Delta}(n-m,k)R(k,m), & n>m.
\end{cases}
\end{displaymath}
\end{proof}

\section{Composita of a reciprocal generating function}
Reciprocal generating functions are the functions, which meet the condition \cite{Wilf_1994} 
\begin{equation}\label{Recipro1} 
A(x)B(x)=1. 
\end{equation}
Let us consider the following problem: if we know the composita 
\begin{math}
x\,B(x)
\end{math}, we need to find the composita 
\begin{math}
x\,A(x)=\frac{x}{B(x)}
\end{math}
 
Let us establish the following theorem on calculation of compositae of reciprocal generating functions.  

\begin{theorem}\label{recip_Theorem}
Suppose we have the generating function 
\begin{math}
B(x), \quad b(0)\neq 0
\end{math}
and the composita
\begin{math}
B^{\Delta}(n,k)
\end{math}
 of the generating function 
 \begin{math}
xB(x)
\end{math}. Then the composita of the generating function $xA(x)$  is equal to  

\begin{equation}
A^{\Delta}(n,m)=\begin{cases}
\frac{1}{B^{\Delta}(1,1)^m}, & n=m;\\
{{\frac{1}{B^{\Delta}(1,1)^m}\sum\limits_{k=1}^{n-m}{{{m+k-1}\choose{m-1}}\,\sum\limits_{j=1}^{k}{{{\frac{(-1)^{j}}{B^{\Delta}(1,1)^{j}}\,{{k}\choose{j}}\,B^{\Delta}(n-m+j,j)}}}}}}, & n>m. 
\end{cases}
\label{Reciprocal}
\end{equation}
\end{theorem}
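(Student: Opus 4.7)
The plan is to reduce the problem to a direct application of Theorem \ref{Theorem_CompRnkFnk}. First I would rewrite $xA(x) = x/B(x)$ in the form $xR(F(x))$ for suitable generating functions $R$ and $F$ with $F(0) = 0$ and $R(0) \neq 0$. A natural choice is
\[
F(x) = \frac{B(x) - b(0)}{b(0)}, \qquad R(u) = \frac{1}{b(0)(1+u)},
\]
since then $R(F(x)) = 1/B(x) = A(x)$ and $r(0) = 1/b(0) = 1/B^{\Delta}(1,1)$.

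Next I would assemble the two inputs required by the theorem. The coefficients $R(k,m)$ of $R(u)^m$ follow from the negative binomial series: writing $R(u)^m = b(0)^{-m}(1+u)^{-m}$ gives
\[
R(k,m) = \frac{(-1)^k}{b(0)^m}\binom{m+k-1}{m-1}.
\]
For the composita $F^{\Delta}(\cdot,k)$, I would expand $(B(x)-b(0))^k$ by the binomial theorem and invoke the identity $[x^n]B(x)^j = B^{\Delta}(n+j,j)$, which is immediate from the definition of the composita of $xB(x)$. This produces
\[
F^{\Delta}(n-m, k) = \sum_{j=1}^{k} \binom{k}{j}\frac{(-1)^{k-j}}{b(0)^j}\, B^{\Delta}(n-m+j, j),
\]
where the $j=0$ term disappears because $n-m > 0$.

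Finally I would substitute both expressions into the formula of Theorem \ref{Theorem_CompRnkFnk}. The case $n=m$ gives $r(0)^m = 1/B^{\Delta}(1,1)^m$ directly. For $n > m$, multiplying the two factors and collapsing the signs via $(-1)^{k-j}(-1)^k = (-1)^j$ yields exactly the displayed formula.

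The main potential pitfall is notational rather than conceptual: one must keep straight the distinction between $B^{\Delta}$ as the composita of $xB(x)$ and the plain coefficients of powers of $B(x)$, which differ by the index shift $[x^n]B(x)^j = B^{\Delta}(n+j,j)$. Once this shift is handled correctly, the rest is routine manipulation of the nested sums.
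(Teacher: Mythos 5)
Your proposal is correct and follows essentially the same route as the paper: the same decomposition $xA(x)=xR(F(x))$ with $F(x)=(B(x)-b(0))/b(0)$ and $R(u)=1/(b(0)(1+u))$, the same negative-binomial coefficients for $R(u)^m$, the same binomial expansion with the shift $[x^n]B(x)^j=B^{\Delta}(n+j,j)$ for the composita of $F$, and the same final application of Theorem \ref{Theorem_CompRnkFnk}. If anything, your final substitution and sign collapse $(-1)^{k-j}(-1)^k=(-1)^j$ is carried out more cleanly than the paper's concluding display.
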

  
\begin{proof} 
Based on the formula (\ref{Recipro1}), we can write 

\begin{displaymath}
xA(x)=\frac{x}{b(0)+B(x)-b(0)}=\frac{1}{b(0)}\frac{x}{1+\frac{B(x)-b(0)}{b(0)}}.
\end{displaymath}

Then for obtaining the composita, it is necessary to find the expression of generating function coefficients  
\begin{displaymath}
[xA(x)]^k=\left[\frac{1}{b(0)}\frac{x}{1+\frac{1}{b(0)}(B(x)-b(0))}\right]^k.
\end{displaymath}

Let us obtain the composita of the function 
\begin{math}
\frac{1}{b_0}(B(x)-b_0).
\end{math}
Knowing that 
\begin{math}
[xB(x)]^k = \sum\limits_{n\geq k}B^{\Delta}(n,k)x^n
\end{math}
and
\begin{math}
b(0)=B^{\Delta}(1,1),
\end{math}
we get
\begin{displaymath}
\left[\frac{1}{b_0}(B(x)-b_0)\right]^k=\sum\limits_{n\geq k}\sum_{j=1}^{k}{B^{\Delta}(1,1)^{-j}\,(-1)^{k
 -j}\,{{k}\choose{j}}\,B^{\Delta}(n+j,j)}x^n.
\end{displaymath}

The coefficients of the function 
\begin{math}
[F(x)]^k=\left[\frac{1}{B^{\Delta}(1,1)}\frac{1}{1+x}\right]^k
\end{math}
is equal to
\begin{displaymath}
\frac{1}{B^{\Delta}(1,1)^k}{n+k-1 \choose k-1}(-1)^{n}.
\end{displaymath}

Then in accordance with the Theorem \ref{Theorem_CompRnkFnk}, we obtain the desired formula  
\begin{displaymath}
A(n,m)=\begin{cases}
1, & m=0;\\
\sum\limits_{k=1}^n\sum\limits_{j=1}^{k}{B^{\Delta}(1,1)^{-j}\,(-1)^{k-j}\,{{k}\choose{j}}\,B^{\Delta}(n+j,j)}\frac{1}{B^{\Delta}(1,1)^m}{k+m-1 \choose m-1}(-1)^{k},& m>1.
\end{cases}
\end{displaymath}
\end{proof}

\section{Composita of reverse generating functions}\label{InverseComposita}
In the following lemma we give the Lagrange inversion formula, which was proved by \cite{Stanley_v2}.

\begin{lemma}[The Lagrange inversion formula]
\label{Lagrange formula}
Suppose $H(x)=\sum_{n\geq 0}h(n)x^n$ with $h(0)\neq 0$, and let $A(x)$ be defined by
\begin{equation}
A(x)=xH(A(x)).
\end{equation}
Then
\begin{equation}
n[x^n]A(x)^k=k[x^{n-k}]H(x)^n,
\end{equation}
where $[x^n]A(x)^k$ is the coefficient of $x^n$ in  $A(x)^k$ and  $[x^{n-k}]H(x)^n$ is the coefficient of $x^{n-k}$ in  $H(x)^n$.
\end{lemma}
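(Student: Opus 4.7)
The plan is to prove the Lagrange inversion formula using formal residue calculus, where for a formal power series $G(x)$ one writes $[x^n]G(x)=\operatorname{res}_x\!\bigl(G(x)/x^{n+1}\bigr)$, with $\operatorname{res}_x$ denoting the coefficient of $x^{-1}$ on the ring of formal Laurent series.

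First I would recast the defining equation $A(x)=xH(A(x))$ as $x=A(x)/H(A(x))$, so that $A$ is the compositional inverse of $\phi(y)=y/H(y)$. Because $h(0)\ne 0$, the series $\phi(y)$ has a simple zero at the origin and its inverse exists uniquely as a formal power series vanishing at $0$. Next I would apply the change of variable $y=A(x)$, equivalently $x=y/H(y)$, with
\begin{displaymath}
dx=\frac{H(y)-yH'(y)}{H(y)^2}\,dy.
\end{displaymath}
Substituting into $\operatorname{res}_x\!\bigl(A(x)^k/x^{n+1}\bigr)$ and simplifying, the integrand becomes
\begin{displaymath}
\frac{H(y)^n}{y^{n-k+1}}\,dy-\frac{H'(y)H(y)^{n-1}}{y^{n-k}}\,dy,
\end{displaymath}
so extracting the coefficient of $y^{-1}$ yields
\begin{displaymath}
[x^n]A(x)^k=[y^{n-k}]H(y)^n-[y^{n-k-1}]H'(y)H(y)^{n-1}.
\end{displaymath}

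The final step is a short calculation: since $nH'(y)H(y)^{n-1}=\tfrac{d}{dy}H(y)^n$, one gets $[y^{n-k-1}]H'(y)H(y)^{n-1}=\tfrac{n-k}{n}[y^{n-k}]H(y)^n$. Subtracting gives $[x^n]A(x)^k=\tfrac{k}{n}[y^{n-k}]H(y)^n$, which after multiplying through by $n$ is exactly the claimed identity.

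The hard part will be justifying the change-of-variables formula for formal residues, since no analytic machinery (contours, convergence) is available in the formal setting. Concretely, one must verify algebraically that $\operatorname{res}_x f(x)=\operatorname{res}_y\!\bigl(f(x(y))\,x'(y)\bigr)$ whenever $x=y/H(y)$ with $H(0)\ne 0$. The standard route is to check this on the basis $\{x^m:m\in\mathbb{Z}\}$: for $m\ne -1$ the right-hand side vanishes because the integrand is, up to a scalar, the formal derivative of a Laurent series in $y$ and hence has no $y^{-1}$ term; for $m=-1$ both sides equal $1$ by direct expansion. Once that substitution rule is in place, the residue computation above becomes routine power-series algebra.
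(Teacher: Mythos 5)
Your argument is correct, but note that the paper itself offers no proof of this lemma: it states the Lagrange inversion formula and defers to the cited reference (Stanley, \emph{Enumerative Combinatorics} vol.~2), so there is no internal proof to compare against. What you have written is essentially the standard residue-theoretic proof found in that reference. The computation checks out: from $x=\phi(y)=y/H(y)$ one gets $\phi'(y)=\bigl(H(y)-yH'(y)\bigr)/H(y)^2$, the substituted integrand is $H(y)^n/y^{n-k+1}-H'(y)H(y)^{n-1}/y^{n-k}$ as you claim, and the identity $[y^{n-k-1}]\tfrac{1}{n}\tfrac{d}{dy}H(y)^n=\tfrac{n-k}{n}[y^{n-k}]H(y)^n$ closes the argument. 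You also correctly isolate the one genuinely nontrivial ingredient, the formal change-of-variables rule for residues, and your sketch of its verification is the right one: for $m\neq -1$ the series $x(y)^m x'(y)=\tfrac{1}{m+1}\tfrac{d}{dy}x(y)^{m+1}$ is a derivative of a formal Laurent series (legitimate because $x(y)=c_1y(1+\cdots)$ with $c_1\neq 0$ is invertible in the Laurent series field) and so has vanishing residue, while for $m=-1$ writing $x(y)=c_1yu(y)$ with $u(0)=1$ gives $x'/x=1/y+u'/u$ with $u'/u$ a power series, so the residue is $1$. Modulo writing out that verification in full, your proof is complete and self-contained, which is more than the paper provides.
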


Now, let us establish the theorem on obtaining a explicit formula for the composita of reverse generating function. 

\begin{theorem}\label{Theorem_Reversion}
Suppose we have the generating function
\begin{math}
F(x)=\sum\nolimits_{n>0} f(n)x^n, \quad f(1)\neq 0
\end{math} 
and its composita 
\begin{math}
F^{\Delta}(n,k).
\end{math}
Then the composita of the reverse generating function $A(x)$ is equal to the following expression: 

\begin{equation}
A^{\Delta}(n,m)=\begin{cases}
\frac{1}{\,F^{\Delta}(1,1)^n}, & n=m;\\
{{\frac{m}{n\,F^{\Delta}(1,1)^n}\sum\limits_{k=1}^{n-m}{{{n+k-1}\choose{n-1}}\,\sum\limits_{j=1}^{k}{{{\frac{(-1)^{j}}{F^{\Delta}(1,1)^{j}}\,{{k}\choose{j}}\,F^{\Delta}(n-m+j,j)}}}}}}, & n>m.
\end{cases}
\label{Reversion}
\end{equation}
\end{theorem}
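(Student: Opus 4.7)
The plan is to apply the Lagrange inversion formula of Lemma~\ref{Lagrange formula} and then unpack the resulting coefficient with two successive binomial expansions. Put $G(x) := F(x)/x = \sum_{n\geq 0} f(n+1)x^n$ and $H(x) := 1/G(x) = x/F(x)$, so that $H(0) = 1/f(1) \neq 0$ and the reverse relation $F(A(x)) = x$ becomes $A(x) = x\,H(A(x))$. Lemma~\ref{Lagrange formula} then gives
\begin{displaymath}
A^{\Delta}(n,m) = [x^n]A(x)^m = \frac{m}{n}[x^{n-m}]G(x)^{-n}.
\end{displaymath}
For $n=m$, evaluating at $x=0$ immediately yields $G(0)^{-n} = f(1)^{-n} = F^{\Delta}(1,1)^{-n}$, which handles the diagonal case.

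For $n > m$ I expand $G(x)^{-n}$ in two stages. First, write $G(x) = f(1)(1+u(x))$ with $u(x) := (F(x)-f(1)x)/(f(1)x)$, so $u(0)=0$. The generalized binomial series gives
\begin{displaymath}
G(x)^{-n} = f(1)^{-n}\sum_{k\geq 0}(-1)^k\binom{n+k-1}{k}u(x)^k,
\end{displaymath}
and since $u(x)^k$ is divisible by $x^k$, only the terms with $k \leq n-m$ contribute to $[x^{n-m}]$. Second, to rewrite $[x^{n-m}]u(x)^k$ in terms of the given composita of $F$, I use $f(1)^k x^k u(x)^k = (F(x)-f(1)x)^k$ and expand the right-hand side by the binomial theorem as $\sum_{j=0}^{k}(-1)^{k-j}\binom{k}{j}f(1)^{k-j}x^{k-j}F(x)^j$. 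Extracting $[x^{n-m+k}]$ and invoking the defining identity $[x^{n-m+j}]F(x)^j = F^{\Delta}(n-m+j,j)$ (which vanishes when $j=0$ and $n>m$) produces
\begin{displaymath}
[x^{n-m}]u(x)^k = \sum_{j=1}^{k}(-1)^{k-j}\binom{k}{j}f(1)^{-j}F^{\Delta}(n-m+j,j).
\end{displaymath}

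Substituting this back, using $(-1)^k(-1)^{k-j}=(-1)^j$, $\binom{n+k-1}{k}=\binom{n+k-1}{n-1}$, and $f(1)=F^{\Delta}(1,1)$, reproduces the $n>m$ branch of (\ref{Reversion}) exactly. The main obstacle is purely bookkeeping: tracking the index shift that cancels $x^{k-j}$ inside $[x^{n-m+k}]$ to leave a clean $[x^{n-m+j}]F(x)^j$, and shepherding the two layers of alternating signs introduced by the binomial and negative-binomial expansions. There is no new analytic content beyond Lagrange inversion; the remainder of the argument is a careful manipulation of formal power series.
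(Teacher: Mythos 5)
Your proof is correct, and it reaches the formula by a route that is partly the same and partly different from the paper's. The common core is Lagrange inversion: like the paper, you rewrite the reversion relation as $A(x)=xH(A(x))$ with $H(x)=x/F(x)$ and apply Lemma~\ref{Lagrange formula} to reduce everything to $\frac{m}{n}[x^{n-m}]H(x)^n$. Where you diverge is in how that coefficient is extracted. The paper routes it through its earlier machinery: it first records the identity $A^{\Delta}(n,k)=\frac{k}{n}G^{\Delta}(2n-k,n)$ and then substitutes the reciprocal composita of Theorem~\ref{recip_Theorem} (itself proved via the composition Theorem~\ref{Theorem_CompRnkFnk}) evaluated at the shifted indices $(2n-m,n)$. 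You instead expand $G(x)^{-n}=f(1)^{-n}(1+u(x))^{-n}$ directly with the negative binomial series and then unpack $u(x)^k=(F(x)-f(1)x)^k/(f(1)x)^k$ with an ordinary binomial expansion; the sign bookkeeping $(-1)^k(-1)^{k-j}=(-1)^j$, the identity $\binom{n+k-1}{k}=\binom{n+k-1}{n-1}$, and the truncation of both sums at $k\le n-m$, $j\ge 1$ all check out. The underlying computation is essentially the one hidden inside the paper's Theorem~\ref{recip_Theorem}, but your version is self-contained, avoids the intermediate theorems (and the notational slips in the paper's proof, where $G$ and $H$ are conflated), and makes the diagonal case $n=m$ an immediate evaluation at $x=0$ rather than a separate convention. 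The trade-off is that the paper's factorization exhibits the reversion composita as a special case of the reciprocal composita, which is reused elsewhere, whereas your argument proves only this theorem.
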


\begin{proof} 

According to Lemma \ref{Lagrange formula}, for the solution of the functional equation $A(x)=xH(A(x))$, we can write
\begin{displaymath}
n[x^n]A(x)^k=k[x^{n-k}]H(x)^n.
\end{displaymath}

In the left-hand side, there is the composita of the generating function $A(x)$ multiplied by $n$:
\begin{displaymath}
n[x^n]A(x)^k=n\,A^{\Delta}(n,k).
\end{displaymath}

We know that
\begin{displaymath}
\left( xH(x)\right)^k=\sum_{n\geq k}G^{\Delta}(n,k)x^n.
\end{displaymath}
Then

\begin{displaymath}
\left( H(x)\right)^k=\sum_{n\geq k}G^{\Delta}(n,k)x^{n-k}.
\end{displaymath}
If we replace $n-k$ by $m$, we obtain the following expression:

\begin{displaymath}
\left( H(x)\right)^k=\sum_{m\geq 0}G^{\Delta}(m+k,k)x^{m}.
\end{displaymath}
Substituting $n$ for $k$ and $n-k$ for $m$, we get
\begin{displaymath}
[x^{n-k}]H(x)^n=G^{\Delta}(2n-k,n).
\end{displaymath}
Therefore, we get (cf. \cite{KruCompositae})
\begin{equation}\label{LagrangeCompozita}
A^{\Delta}(n,k)=\frac{k}{n}G^{\Delta}(2n-k,n).
\end{equation}

Hence, for solutions of the functional equation 
\begin{math}
A(x)=xH(A(x))
\end{math}, we can use the following expression: 
\begin{displaymath}
[A(x)]^k=\sum_{n\geq k}A^{\Delta}(n,k)x^n=\sum_{n\geq k} \frac{k}{n}G^{\Delta}(2n-k,n)x^n.
\end{displaymath}
Therefore,
\begin{displaymath}
A(x)=\sum_{n\geq 1} \frac{1}{n}G^{\Delta}(2n-1,n)x^n.
\end{displaymath}

Now we write the equation for the reverse generating functions in the form of 
\begin{math}
A(x)H(A(x))=x
\end{math}, where 
\begin{math}
H(x)=\frac{F(x)}{x}
\end{math}.
Then we can write the equation \ref{Lagrange formula} in the following form    
\begin{displaymath}
A(x)=\frac{x}{G(A(x))}.
\end{displaymath}

Hence, based on the formula \ref{LagrangeCompozita}, we get 
\begin{displaymath}
A^{\Delta}(n,m)=\frac{m}{n}R^{\Delta}(2n-m,n),
\end{displaymath}
where 
\begin{math}
R^{\Delta}(n,m)
\end{math}  is a reciprocal composita for the generating function $xH(x)$ (see the formula  \ref{Reciprocal}). 
Revealing 
\begin{math}
R^{\Delta}(n,m)
\end{math}, we obtain the desired formula.      
\end{proof}

\begin{corollary}
For the generating function 
\begin{math}
F(x)=\sum\limits_{n>0} f(n)x^n, f(1)=1,
\end{math}
the coefficients of reverse generating function is equal to the following expression
\begin{equation}\label{aReversion}
a(n)=\begin{cases}
1, & n=1;\\
{{\frac{1}{n}\sum\limits_{k=1}^{n-1}{{{n+k-1}\choose{n-1}}\,\sum\limits_{j=1}^{k}{{{(-1)^{j}\,{{k}\choose{j}}\,F^{\Delta}(n+j-1,j)}}}}}}, & n>1. 
\end{cases}
\end{equation}
\end{corollary}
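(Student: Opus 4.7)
The plan is to obtain the corollary as a direct specialization of Theorem \ref{Theorem_Reversion}. Two observations reduce the statement to an immediate substitution:

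First, I would note that the coefficients $a(n)$ of the reverse generating function $A(x)$ are precisely the values of the composita at $m=1$, since by the defining relation \eqref{GFComposita} one has
\begin{displaymath}
A(x) = [A(x)]^1 = \sum_{n\geq 1} A^{\Delta}(n,1)\, x^n,
\end{displaymath}
so $a(n) = A^{\Delta}(n,1)$. Second, the hypothesis $f(1)=1$ means $F^{\Delta}(1,1) = f(1) = 1$, so every factor of the form $F^{\Delta}(1,1)^{-p}$ appearing in \eqref{Reversion} collapses to $1$.

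Next I would plug $m=1$ and $F^{\Delta}(1,1)=1$ into the two branches of \eqref{Reversion}. For $n=m=1$ the first branch gives $A^{\Delta}(1,1) = 1/F^{\Delta}(1,1)^1 = 1$, matching $a(1)=1$. For $n>1$ the second branch becomes
\begin{displaymath}
a(n) = A^{\Delta}(n,1) = \frac{1}{n} \sum_{k=1}^{n-1} \binom{n+k-1}{n-1} \sum_{j=1}^{k} (-1)^{j} \binom{k}{j} F^{\Delta}(n-1+j, j),
\end{displaymath}
which is exactly \eqref{aReversion}.

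There is no real obstacle here: the work was all done in Theorem \ref{Theorem_Reversion}, and the corollary is just the specialization $m=1$, $F^{\Delta}(1,1)=1$. The only point worth stating carefully is the identification $a(n) = A^{\Delta}(n,1)$, which justifies extracting the ordinary coefficients of $A(x)$ from the composita formula.
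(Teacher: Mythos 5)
Your proposal is correct and follows exactly the intended derivation: the paper states this corollary without a separate proof precisely because it is the specialization $m=1$, $F^{\Delta}(1,1)=f(1)=1$ of Theorem \ref{Theorem_Reversion}, together with the identification $a(n)=A^{\Delta}(n,1)$ from \eqref{GFComposita}. Nothing is missing.
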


Let us consider the examples.
\begin{example}
Suppose we have the generating function 
\begin{math}
F(x)=x-x^2-x^3
\end{math}, the  composita of reverse function needs to be found.  
To do this we need to find the composita of reciprocal function of
\begin{math}
R(x)=\frac{x}{1-x-x^2}
\end{math}. It has the expression \cite{KruCompositae}
\begin{displaymath}
R(n,m)=\begin{cases}
1, & n=m;\\
\sum\limits_{k=1}^{n-m} {k \choose n-m-k}{k+m-1 \choose m-1}, & n>m.
\end{cases}
\end{displaymath}

Then the composita of reverse generating function have the expression  
\begin{displaymath}
A(n,m)=\frac{m}{n}R(2\,n-m,n).
\end{displaymath}
Hence
\begin{displaymath}
A(n,m)=\begin{cases}
1, & n=m;\\
\frac{m}{n}\sum\limits_{k=1}^{n-m} {k \choose n-m-k}{k+n-1 \choose n-1}, & n>m.
\end{cases}
\end{displaymath}
\end{example}

\begin{example}\label{Example_2lnmx} 
Let us obtain the generating function 
\begin{math}
A(x)=[2\,\ln(1+x)-x]^{-1}.
\end{math}
To do this we need to find the composita of the generating function  
\begin{math}
F(x)=2\,\ln(1+x)-x.
\end{math}   
The composita of the generating function 
\begin{math}
2\,\ln(1+x)
\end{math} is equal to 
\begin{displaymath}
2^{k}\frac{k!}{n!}\genfrac{[}{]}{0pt}{}{n}{k}.
\end{displaymath}

The  composita of generating function 
\begin{math}
(-x)
\end{math} has the expression 
\begin{math}
(-1)^k\,\delta(n,k)
\end{math}. 
Then the composita of the sum of the generating functions, according to \cite{KruCompositae},  is equal to    
\begin{displaymath}
\sum_{l=0}^{k}{{{k}\choose{l}}\,\left(k-l\right)!\,2^{k-l}\,\left(-
 1\right)^{l}\,\sum_{i=l}^{n+l-k}{{{\delta_{i,l}\,\genfrac{[}{]}{0pt}{}{n-i}{k-l}}\over{\left(n-i\right)!}}}}.
\end{displaymath}

Since 
\begin{displaymath}
\delta(i,l)=\begin{cases}
1 &, l=i,\\
0 &, otherwise,
\end{cases}
\end{displaymath}
we get the following expression for the composita
\begin{displaymath}
F^{\Delta}(n,k)=\sum_{l=0}^{k}{{{{{k}\choose{l}}\,\left(k-l\right)!\,2^{k-l}\,
 \left(-1\right)^{l}\,\genfrac{[}{]}{0pt}{}{n-i}{k-l}}\over{
 \left(n-l\right)!}}}.
\end{displaymath}
 
Hence, applying the formula for the composita of the sum of the generating functions, according to \cite{KruCompositae}, we obtain 
\begin{displaymath}
{{\frac{m}{n}\,\sum\limits_{k=0}^{n-m}{{{n+k-1}\choose{n-1}}\,\sum\limits_{j=0}^{k}{\left(-
 1\right)^{j}\,{{k}\choose{j}}\,\sum\limits_{l=0}^{j}{{{{{j}\choose{l}}\,
 \left(j-l\right)!\,2^{j-l}\,\left(-1\right)^{l}\,\genfrac{[}{]}{0pt}{}{n-m-l+j}{j-l}}\over{\left(n-m-l+j\right)!}}}}}}}.
\end{displaymath} 
Therefore,  the desired formula have the expression 
\begin{displaymath}
a(n)={{\frac{1}{n}\sum\limits_{k=0}^{n-1}{{{n+k-1}\choose{n-1}}\,\sum\limits_{j=0}^{k}{\left(-1
 \right)^{j}\,{{k}\choose{j}}\,\sum\limits_{l=0}^{j}{{{{{j}\choose{l}}\,
 \left(j-l\right)!\,2^{j-l}\,\left(-1\right)^{l}\,\genfrac{[}{]}{0pt}{}{n-1-l+j}{j-l}}\over{\left(n-l+j-1\right)!}}}}}}}.
 \end{displaymath} 
\end{example}

\begin{example}\label{Example_ln1mx}
Let us find the expression of generating function coefficients 
\begin{math}
A(x)=[\ln(1+x)(1-x)]^{-1}
\end{math}.
 To do this we need to obtain the composita  of generating function 
 \begin{displaymath}
\ln(1+x)(1-x)
 \end{displaymath}
 
The composita of generating function  
\begin{math}
\ln(1+x)
\end{math} has the expression 
\begin{displaymath}
\left(-1\right)^{k}\frac{k!}{n!}\genfrac{[}{]}{0pt}{}{n}{k}.
 \end{displaymath}
The coefficients of the generating function  
\begin{math}
(1-x)^k
\end{math} are equal to  
\begin{displaymath}
{{k}\choose{n}}\,\left(-1\right)^{n}.
 \end{displaymath}
Then the composita of the product is equal to 
\begin{displaymath}
k!\,\sum_{i=k}^{n}{{\frac{1}{i!}\genfrac{[}{]}{0pt}{}{i}{k}\,{{k
 }\choose{n-i}}\,\left(-1\right)^{n-i}}}.
 \end{displaymath}

Further, applying the formula \ref{aReversion}, we get the expression for the coefficients of the reverse generating function.  
\begin{displaymath}
a(n)=\begin{cases}
1, & n=1;\\
{{\frac{1}{n}\sum\limits_{k=1}^{n-1}{{{n+k-1}\choose{n-1}}\,\sum\limits_{j=1}^{k}{{{(-1)^{j}\,{{k}\choose{j}}\,j!\,\sum\limits_{i=j}^{n+j-1}{{\frac{1}{i!}\genfrac{[}{]}{0pt}{}{i}{j}\,{{j
 }\choose{n+j-1-i}}\,\left(-1\right)^{n+j-1-i}}}}}}}}}, & n>1. 
\end{cases}
\end{displaymath}
\end{example}

\begin{example}\label{Reversion2xmExp} 
Let us find the solution for the functional equation 
\begin{math}
2\,A(x)-e^{A(x)}+1=\ln(1+x)
\end{math}. 
It can be represented in the form of 
\begin{displaymath}
A(x)=F(\ln(1+x)),
\end{displaymath}
where
\begin{math}
F(x)=[2x-e^x+1]^{-1}
\end{math}.

In order to solve this problem, it is necessary to write the expression of the function coefficients $F(x)$.
To do this we find the composita of the generating function 
\begin{math}
R(x)=2x-e^x+1
\end{math}.
 The  composita  of the generating function 
 \begin{math}
(1-e^x)
\end{math}
 is equal to 
\begin{displaymath}
\left(-1\right)^{k}\frac{k!}{n!}\genfrac{\{}{\}}{0pt}{}{n}{k}.
\end{displaymath}

The composita of generating function  
 \begin{math}
R(x)=2x
\end{math} is equal to 
\begin{math}
2^n\delta_{n,k}.
\end{math}
Hence, based on the Theorem on the sum of compositae \cite{KruCompositae}, the composita of the generating function  $F(x)$ with
\begin{math}
\genfrac{\{}{\}}{0pt}{}{0}{0}=1
\end{math}, is equal to  
\begin{displaymath}
R^{\Delta}(n,k)=\sum_{l=0}^{k}{{{k}\choose{l}}\,\left(k-l\right)!\,\left(-1\right)
 ^{k-l}\,2^{l}\,\sum_{i=l}^{n+l-k}{{{\delta_{i,l}\,\genfrac{\{}{\}}{0pt}{}{n-i}{k-l}}\over{\left(n-i\right)!}}}}.
\end{displaymath}
After transformation, we get
\begin{displaymath}
R^{\Delta}(n,k)=\sum_{l=0}^{k}{{{{{k}\choose{l}}\,\left(k-l\right)!\,\left(-1
 \right)^{k-l}\,2^{l}\,\genfrac{\{}{\}}{0pt}{}{n-l}{k-l}}\over{
 \left(n-l\right)!}}}.
\end{displaymath}

Hence, based on the formula \ref{Reversion} and with 
\begin{math}
A(n,n)=1,
\end{math} we  write 
\begin{displaymath}
F^{\Delta}(n,m)={{\frac{m}{n}\,\sum\limits_{k=0}^{n-m}{{{n+k-1}\choose{n-1}}\,\sum\limits_{j=0}^{k}{{k}\choose{j}}\,\sum_{l=0}^{j}{{{{{j}\choose{l}}\,
  \left(j-l\right)!\,2^{l}\,\left(-1\right)^{l}\,\genfrac{\{}{\}}{0pt}{}{n-m-l+j}{j-l}}\over{\left(n-m-l+j\right)!}}}}}}.
\end{displaymath}
  
Since the coefficients of reverse function have the expression
\begin{math}
a(n)=A(n,1),
\end{math} we can write the following expression  
\begin{displaymath}
f(n)={{\frac{1}{n}\,\sum\limits_{k=0}^{n-1}{{{n+k-1}\choose{n-1}}\,\sum\limits_{j=0}^{k}{{k}\choose{j}}\,\sum_{l=0}^{j}{{{{{j}\choose{l}}\,
    \left(j-l\right)!\,2^{l}\,\left(-1\right)^{l}\,\genfrac{\{}{\}}{0pt}{}{n-l+j-1}{j-l}}\over{\left(n-l+j-1\right)!}}}}}}.
\end{displaymath}

Now if we consider coefficients of the exponential generating function
 \begin{math}
n!\,f(n)
\end{math}, then we can get the sequence $A000311$ \cite{oeis}.

Therefore, from  the expression of coefficients for $F(x)$, the composita for 
\begin{math}
\ln(1+x)
\end{math}, the composition formula \ref{KruForm2}, we obtain the expression for the desired coefficients  
\begin{displaymath}
a(n)=\sum\limits_{m=1}^n {{\frac{(m-1)!}{n!}\genfrac{[}{]}{0pt}{}{n}{m}
 }} {{\sum_{k=1}^{m-1}{{{m+k-1}\choose{m-1}}\,\sum_{j=1}^{k}{{k}\choose{j}}\,\sum_{l=0}^{j}{{{{{j}\choose{l}}\,\left(j-l\right)!\,2^{l}\,\left(-1\right)^{l}\,\genfrac{\{}{\}}{0pt}{}{m-l+j-1}{j-l}}\over{\left(m-l+j-1\right)!}}}}}}.
\end{displaymath}

\end{example}

\end{document}